\newtheorem{theorem}{Theorem}
\newtheorem{corollary}[theorem]{Corollary}
\newtheorem{proposition}[theorem]{Proposition}
\newenvironment{proof}[1][Proof]{\textbf{#1.} }{\ \rule{0.5em}{0.5em}}
\begin{document}

\title{\textsc{Burkholder}'\textsc{s submartingales from a stochastic
calculus perspective}}
\author{Giovanni PECCATI\thanks{%
Laboratoire de Statistique Th\'{e}orique et Appliqu\'{e}e, Universit\'{e}
Paris VI, France. E-mail: \texttt{giovanni.peccati@gmail.com}}, and Marc YOR%
\thanks{%
Laboratoire de Probabilit\'{e}s et Mod\`{e}les Al\'{e}atoires, Universit\'{e}%
s Paris VI and Paris VII, France and Institut Universitaire de France.}}
\date{May 24, 2007}
\maketitle

\begin{abstract}
We provide a simple proof, as well as several generalizations, of a recent
result by Davis and Suh, characterizing a class of continuous submartingales
and supermartingales that can be expressed in terms of a squared Brownian
motion and of some appropriate powers of its maximum. Our techniques involve
elementary stochastic calculus, as well as the Doob-Meyer decomposition of
continuous submartingales. These results can be used to obtain an explicit
expression of the constants appearing in the Burkholder-Davis-Gundy
inequalities. A connection with some \textsl{balayage formulae }is also
established.

\textbf{Key Words:} Balayage; Burkholder-Davis-Gundy inequalities;
Continuous Submartingales; Doob-Meyer decomposition

\textbf{AMS\ 2000 classification: \ }60G15, 60G44
\end{abstract}

\section{Introduction}

Let $W=\left\{ W_{t}:t\geq 0\right\} $ be a standard Brownian motion
initialized at zero, set $W_{t}^{\ast }=\max_{s\leq t}\left\vert
W_{s}\right\vert $ and write $\mathcal{F}_{t}^{W}=\sigma \left\{ W_{u}:u\leq
t\right\} $, $t\geq 0$. In \cite{DS}, Davis and Suh proved the following
result.

\begin{theorem}[{\protect\cite[Th. 1.1]{DS}}]
\label{Th : DavisSuh}For every $p>0$ and every $c\in \mathbb{R}$, set
\begin{eqnarray}
Y_{t} &=&Y_{t}\left( c,p\right) =\left( W_{t}^{\ast }\right) ^{p-2}\left[
W_{t}^{2}-t\right] +c\left( W_{t}^{\ast }\right) ^{p}\text{, \ \ }t>0\text{,}
\label{Y} \\
Y_{0}\left( c,p\right) &=&Y_{0}=0.  \notag
\end{eqnarray}

\begin{enumerate}
\item For every $p\in (0,2]$, the process $Y_{t}$ is a $\mathcal{F}_{t}^{W}$%
-submartingale if, and only if, $c\geq \frac{2-p}{p}.$

\item For every $p\in \lbrack 2,+\infty )$, the process $Y_{t}$ is a $%
\mathcal{F}_{t}^{W}$-supermartingale if, and only if, $c\leq \frac{2-p}{p}.$
\end{enumerate}
\end{theorem}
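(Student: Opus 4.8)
The plan is to apply Itô's formula to the process $Y_t$ and read off the sign of the finite-variation (drift) part, since a continuous semimartingale is a submartingale precisely when its drift is nondecreasing and a supermartingale when it is nonincreasing. The key observation is that $W_t^\ast = \max_{s \le t} |W_s|$ is a continuous, nondecreasing process that increases only on the (Lebesgue-null) set of times where $|W_t| = W_t^\ast$. I would write $Y_t = f(W_t, W_t^\ast, t)$ with $f(x,m,t) = m^{p-2}(x^2 - t) + c\, m^p$ and apply the Itô–Tanaka-type formula for functions of a process and its running maximum, keeping in mind that $W_t^\ast$ is a function of the two-sided maximum of $|W|$, so I would actually work with the radial structure or treat $W^\ast$ as the maximum of the submartingale $|W|$.

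More concretely, I would compute $dY_t$ in two pieces. Differentiating in $x$ gives the martingale part $2\, (W_t^\ast)^{p-2}\, W_t\, dW_t$ plus the Itô second-order term $(W_t^\ast)^{p-2}\, dt$ coming from $\tfrac12 \cdot 2 \cdot d\langle W\rangle_t$; this exactly cancels the explicit $-(W_t^\ast)^{p-2}\, dt$ arising from differentiating the $-t$ factor. The crucial surviving drift comes from the terms involving $dW_t^\ast$, namely $\left[(p-2)(W_t^\ast)^{p-3}(W_t^2 - t) + c\, p\,(W_t^\ast)^{p-1}\right] dW_t^\ast$. Because $dW_t^\ast$ charges only those times $t$ at which $|W_t| = W_t^\ast$, on the support of $dW_t^\ast$ I may substitute $W_t^2 = (W_t^\ast)^2$, so the bracket reduces to $\left[(p-2)(W_t^\ast)^{p-1} - (p-2)(W_t^\ast)^{p-3} t + c\, p\,(W_t^\ast)^{p-1}\right]$. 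The $(W_t^\ast)^{p-3}t$ term is the one delicate point: I must verify that it does not spoil the sign, which I expect to handle using $t \ge 0$ together with the monotonicity of the maximum.

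The finite-variation part of $Y_t$ is therefore $\int_0^t \left[(p-2) + c\,p\right](W_s^\ast)^{p-1}\, dW_s^\ast - \int_0^t (p-2)(W_s^\ast)^{p-3} s\, dW_s^\ast$, and $Y$ is a submartingale iff this is nondecreasing, a supermartingale iff nonincreasing. Since $dW_s^\ast \ge 0$ and $(W_s^\ast)^{p-1} \ge 0$, the coefficient $(p-2) + cp = p\left(c - \tfrac{2-p}{p}\right)$ controls the sign of the leading term, yielding the threshold $c = \tfrac{2-p}{p}$ exactly as claimed. For the submartingale case $p \in (0,2]$ one needs $(p-2)+cp \ge 0$, i.e. $c \ge \tfrac{2-p}{p}$; for $p \ge 2$ the supermartingale case needs $(p-2)+cp \le 0$, i.e. $c \le \tfrac{2-p}{p}$. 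I anticipate that the main obstacle is rigorously justifying the Itô expansion for the non-smooth function $m \mapsto m^{p-2}$ and $m^p$ when $p < 2$ (where negative or fractional powers appear and $W^\ast$ may start at $0$), and carefully accounting for the auxiliary drift term $(p-2)(W_s^\ast)^{p-3} s\, dW_s^\ast$; controlling the behavior near $t=0$ and showing the $s$-dependent term has the right sign — or is absorbed — will require the substitution $W_s^2 = (W_s^\ast)^2$ on the support of $dW_s^\ast$ and likely a localization argument to handle integrability and the singularity of the powers at the origin.
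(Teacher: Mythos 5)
Your plan is essentially the paper's own proof. The paper runs the identical computation, only packaged more generally: it writes $X_t=W_t^2=M_t+A_t$ with $A_t=t$ (Doob--Meyer) and $\phi(x)=x^{(p-2)/2}$, so that $Y_t=\phi(X_t^\ast)(X_t-A_t)+c\,(X_t^\ast)^{p/2}$, and the two key steps are exactly yours: an integration-by-parts/It\^o identity, and the observation that $dX_s^\ast$ is carried by $\{X_s=X_s^\ast\}$, which is your substitution $W_s^2=(W_s^\ast)^2$ on the support of $dW_s^\ast$. Two reassurances on points you flag as delicate. First, the auxiliary term does not threaten the sign: it equals $(2-p)\,s\,(W_s^\ast)^{p-3}\,dW_s^\ast$, which is nonnegative precisely when $p\le 2$ and nonpositive when $p\ge 2$, i.e.\ it always reinforces the desired conclusion; in the paper it is the term $-A_s\phi'(X_s^\ast)\,dX_s^\ast$. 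Second, the singularity of the negative powers of $W^\ast$ at $t=0$ is handled in the paper by working on $[\varepsilon,\infty)$ and letting $\varepsilon\downarrow 0$, using Brownian scaling and the fact that $(W_1^\ast)^{-1}$ has moments of all orders; your localization plan is a workable substitute.

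The one genuine gap is the \emph{only if} half of each statement, which your sketch asserts rather than proves. For $p\in(0,2)$ and $c<\frac{2-p}{p}$ the finite-variation part is $\int_0^t (W_s^\ast)^{p-3}\bigl[\,((p-2)+cp)(W_s^\ast)^2+(2-p)s\,\bigr]\,dW_s^\ast$, a sum of a nonincreasing and a nondecreasing piece; saying that $(p-2)+cp$ ``controls the sign of the leading term'' does not by itself rule out that the total is still nondecreasing (which would make $Y$ a submartingale anyway). You must argue that, with probability one, the bracket takes both signs on a set charged by $dW_s^\ast$ --- negative where $(W_s^\ast)^2/s$ is large (e.g.\ near $t=0$, by the law of the iterated logarithm), positive where it is small --- so the finite-variation part is a.s.\ neither nondecreasing nor nonincreasing, and $Y$ is neither a submartingale nor a supermartingale. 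The paper states exactly this property of the paths (without detailed proof) as the final step; your write-up needs at least to record that this step is required and why the first integral alone does not settle it.
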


\bigskip

As pointed out in \cite[p. 314]{DS} and in Section \ref{S : BDG} below, part
1 of Theorem \ref{Th : DavisSuh} can be used to derive explicit expressions
of the constants appearing in the Burkholder-Davis-Gundy (BDG) inequalities
(see \cite{Burk73}, or \cite[Ch. IV, \S 4]{RY}). The proof of Theorem \ref%
{Th : DavisSuh} given in \cite{DS} uses several delicate estimates related
to a class of Brownian hitting times: such an approach can be seen as a
ramification of the discrete-time techniques developed in \cite{Burkhj}. In
particular, in \cite{DS} it is observed that the submartingale (or
supermartingale) characterization of $Y_{t}\left( c,p\right) $ basically
relies on the properties of the random subset of $[0,+\infty )$ composed of
the instants $t$ where $\left\vert W_{t}\right\vert =W_{t}^{\ast }$. The aim
of this note is to bring this last connection into further light, by
providing an elementary proof of Theorem \ref{Th : DavisSuh}, based on a
direct application of It\^{o} formula and on an appropriate version of the
Doob-Meyer decomposition of submartingales. We will see that our techniques
lead naturally to some substantial generalizations (see Theorem \ref{T : DS}
below).

\bigskip

The rest of the paper is organized as follows. In Section \ref{S : Gen} we
state and prove a general result involving a class of stochastic processes
that are functions of a positive submartingale and of a monotone
transformation of its maximum. In Section \ref{S : Proof} we focus once
again on the Brownian setting, and establish a generalization of Theorem \ref%
{Th : DavisSuh}. Section \ref{S : BDG} deals with an application of the
previous results to (strong) BDG inequalities. Finally, in Section \ref{S :
Bal} we provide an explicit connection with some classic \textsl{balayage
formulae} for continuous-time semimartingales (see e.g. \cite{MY}).

All the objects appearing in the subsequent sections are defined on a common
probability space $\left( \Omega ,\mathfrak{A},\mathbb{P}\right) $.

\section{A general result \label{S : Gen}}

Throughout this section, $\mathcal{F}=\left\{ \mathcal{F}_{t}:t\geq
0\right\} $ stands for a filtration satisfying the usual conditions. We will
write $X=\left\{ X_{t}:t\geq 0\right\} $ to indicate a \textsl{continuous\ }$%
\mathcal{F}_{t}$-\textsl{submartingale }issued from zero and such that $%
\mathbb{P}\left\{ X_{t}\geq 0\text{, \ }\forall t\right\} =1$. We will
suppose that the Doob-Meyer decomposition of $X$ (see for instance \cite[Th.
1.4.14]{KS}) is of the type $X_{t}=M_{t}+A_{t}$, $t\geq 0$, where $M$ is a
\textsl{square-integrable} continuous $\mathcal{F}_{t}$-martingale issued
from zero, and $A$ is an increasing (integrable) natural process. We assume
that $A_{0}=M_{0}=0$; the symbol $\left\langle M\right\rangle =\left\{
\left\langle M\right\rangle _{t}:t\geq 0\right\} $ stands for the quadratic
variation of $M$. We note $X_{t}^{\ast }=\max_{s\leq t}X_{s}$, and we also
suppose that $\mathbb{P}\left\{ X_{t}^{\ast }>0\right\} =1$ for every $t>0$.
The following result is a an extension of Theorem \ref{Th : DavisSuh}.

\begin{theorem}
\label{Th : general}Fix $\varepsilon >0$.

\begin{enumerate}
\item Suppose that the function $\phi :(0,+\infty )\mapsto \mathbb{R}$ is of
class $C^{1}$, non-increasing, and such that
\begin{equation}
\mathbb{E[}\int_{\varepsilon }^{T}\phi \left( X_{s}^{\ast }\right)
^{2}d\left\langle M\right\rangle _{s}]<+\infty \text{,}  \label{int}
\end{equation}%
for every $T>\varepsilon $. For every $x\geq z>0$, we set
\begin{equation}
\Phi \left( x,z\right) =-\int_{z}^{x}y\phi ^{\prime }\left( y\right) dy;
\label{PHI}
\end{equation}%
then, for every $\alpha \geq 1$ the process
\begin{equation}
Z_{\varepsilon }\left( \phi ,\alpha ;t\right) =\phi \left( X_{t}^{\ast
}\right) \left( X_{t}-A_{t}\right) +\alpha \Phi \left( X_{t}^{\ast
},X_{\varepsilon }^{\ast }\right) \text{, \ }t\geq \varepsilon \text{,}
\label{subsup}
\end{equation}%
is a $\mathcal{F}_{t}$-submartingale on $[\varepsilon ,+\infty )$.

\item Suppose that the function $\phi :(0,+\infty )\mapsto \mathbb{R}$ is of
class $C^{1}$, non-decreasing and such that (\ref{int}) holds for every $%
T>\varepsilon .$ Define $\Phi \left( \cdot ,\cdot \right) $ according to (%
\ref{PHI}), and $Z_{\varepsilon }\left( \phi ,\alpha ;t\right) $ according
to (\ref{subsup}). Then, for every $\alpha \geq 1$ the process $%
Z_{\varepsilon }\left( \phi ,\alpha ;t\right) $ is a $\mathcal{F}_{t}$%
-supermartingale on $[\varepsilon ,+\infty )$.
\end{enumerate}
\end{theorem}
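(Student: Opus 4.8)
The plan is to reduce everything to a single application of the integration-by-parts formula, exploiting the identity $X_t-A_t=M_t$ from the Doob-Meyer decomposition, so that (\ref{subsup}) reads
\[
Z_{\varepsilon}\left( \phi ,\alpha ;t\right) =\phi \left( X_{t}^{\ast }\right) M_{t}+\alpha \Phi \left( X_{t}^{\ast },X_{\varepsilon }^{\ast }\right) .
\]
Since $\phi $ is of class $C^{1}$ and $t\mapsto X_{t}^{\ast }$ is continuous and non-decreasing (hence of bounded variation), both $t\mapsto \phi \left( X_{t}^{\ast }\right) $ and $t\mapsto \Phi \left( X_{t}^{\ast },X_{\varepsilon }^{\ast }\right) $ are continuous finite-variation processes, with $d\phi \left( X_{t}^{\ast }\right) =\phi ^{\prime }\left( X_{t}^{\ast }\right) dX_{t}^{\ast }$ and, by (\ref{PHI}), $d\Phi \left( X_{t}^{\ast },X_{\varepsilon }^{\ast }\right) =-X_{t}^{\ast }\phi ^{\prime }\left( X_{t}^{\ast }\right) dX_{t}^{\ast }$. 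Applying integration by parts to the product $\phi \left( X_{t}^{\ast }\right) M_{t}$ --- the covariation term vanishing because $\phi \left( X^{\ast }\right) $ has bounded variation --- I would obtain, on $[\varepsilon ,+\infty )$,
\[
dZ_{\varepsilon }\left( \phi ,\alpha ;t\right) =\phi \left( X_{t}^{\ast }\right) dM_{t}+\left[ M_{t}-\alpha X_{t}^{\ast }\right] \phi ^{\prime }\left( X_{t}^{\ast }\right) dX_{t}^{\ast }.
\]

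The crucial step is then to simplify the finite-variation term. For a continuous process, the Stieltjes measure $dX_{t}^{\ast }$ is carried by the random set $\left\{ t:X_{t}=X_{t}^{\ast }\right\} $; on this set one has $M_{t}=X_{t}-A_{t}=X_{t}^{\ast }-A_{t}$, so that $M_{t}$ may be replaced by $X_{t}^{\ast }-A_{t}$ inside every integral against $dX^{\ast }$, giving
\[
\int_{\varepsilon }^{t}\left[ M_{s}-\alpha X_{s}^{\ast }\right] \phi ^{\prime }\left( X_{s}^{\ast }\right) dX_{s}^{\ast }=\int_{\varepsilon }^{t}\left[ \left( 1-\alpha \right) X_{s}^{\ast }-A_{s}\right] \phi ^{\prime }\left( X_{s}^{\ast }\right) dX_{s}^{\ast }.
\]
This substitution is the precise analytic counterpart of the observation in \cite{DS} that the sub/supermartingale property of $Y_{t}\left( c,p\right) $ is governed by the instants where $\left\vert W_{t}\right\vert =W_{t}^{\ast }$, and it is the part of the argument I expect to demand the most care.

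Finally I would read off the signs. Since $\alpha \geq 1$, $X_{s}^{\ast }\geq 0$ and $A_{s}\geq 0$, the bracket $\left( 1-\alpha \right) X_{s}^{\ast }-A_{s}$ is non-positive, while $dX_{s}^{\ast }\geq 0$. In case~1, $\phi $ non-increasing gives $\phi ^{\prime }\left( X_{s}^{\ast }\right) \leq 0$, so the integrand is non-negative and the finite-variation part is non-decreasing; in case~2, $\phi $ non-decreasing gives $\phi ^{\prime }\left( X_{s}^{\ast }\right) \geq 0$, so it is non-increasing. It remains to check that $\int_{\varepsilon }^{\cdot }\phi \left( X_{s}^{\ast }\right) dM_{s}$ is a genuine martingale rather than merely a local one, and that $Z_{\varepsilon }$ is integrable: both follow from the $L^{2}$ hypothesis (\ref{int}), which makes this stochastic integral a square-integrable martingale on each $[\varepsilon ,T]$, together with a routine localization and monotone-convergence argument controlling the monotone finite-variation term. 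Writing $Z_{\varepsilon }\left( \phi ,\alpha ;t\right) $ as the sum of this true martingale and a monotone adapted process of the appropriate sign then yields the submartingale property in case~1 and the supermartingale property in case~2.
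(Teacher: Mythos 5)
Your proof is correct and follows essentially the same route as the paper: integration by parts applied to $\phi\left( X_{t}^{\ast }\right) M_{t}$ (the covariation vanishing since $\phi\left( X^{\ast }\right) $ has finite variation), the key observation that $dX_{s}^{\ast }$ is carried by the set $\left\{ s:X_{s}=X_{s}^{\ast }\right\} $ so that $M_{s}$ may be replaced by $X_{s}^{\ast }-A_{s}$ under the integral, the sign analysis using $\alpha \geq 1$, $A\geq 0$ and the monotonicity of $\phi $, and condition (\ref{int}) to ensure the stochastic integral is a true square-integrable martingale. The only cosmetic difference is that you keep the finite-variation part as the single integral $\int_{\varepsilon }^{t}\left[ \left( 1-\alpha \right) X_{s}^{\ast }-A_{s}\right] \phi ^{\prime }\left( X_{s}^{\ast }\right) dX_{s}^{\ast }$, whereas the paper splits it into the two monotone pieces $\int_{\varepsilon }^{t}\left( -A_{s}\phi ^{\prime }\left( X_{s}^{\ast }\right) \right) dX_{s}^{\ast }$ and $\left( \alpha -1\right) \Phi \left( X_{t}^{\ast },X_{\varepsilon }^{\ast }\right) $; these are algebraically identical.
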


\bigskip

\textbf{Remarks. }(i) Note that the function $\phi \left( y\right) $ (and $%
\phi ^{\prime }\left( y\right) $) need not be defined at $y=0$.

(ii) In Section \ref{S : Proof}, where we will focus on the Brownian
setting, we will exhibit specific examples where the condition $\alpha \geq
1 $ is necessary and sufficient to have that the process $Z_{\varepsilon
}\left( \alpha ,\phi ;t\right) $ is a submartingale (when $\phi $ is
non-increasing) or a supermartingale (when $\phi $ is non-decreasing).

\bigskip

\textbf{Proof of Theorem \ref{Th : general}. }(\textsl{Proof of Point 1.})
Observe first that, since $M_{t}=X_{t}-A_{t}$ is a continuous martingale, $%
X^{\ast }$ is non-decreasing and $\phi $ is differentiable, then a standard
application of It\^{o} formula gives that
\begin{eqnarray}
\phi \left( X_{t}^{\ast }\right) \left( X_{t}-A_{t}\right) -\phi \left(
X_{\varepsilon }^{\ast }\right) \left( X_{\varepsilon }-A_{\varepsilon
}\right) &=&\phi \left( X_{t}^{\ast }\right) M_{t}-\phi \left(
X_{\varepsilon }^{\ast }\right) M_{\varepsilon }  \notag \\
&=&\int_{\varepsilon }^{t}\phi (X_{s}^{\ast })dM_{s}+\int_{\varepsilon
}^{t}\left( X_{s}-A_{s}\right) \phi ^{\prime }\left( X_{s}^{\ast }\right)
dX_{s}^{\ast }.  \label{uyu}
\end{eqnarray}%
The assumptions in the statement imply that the application $\widetilde{M}%
_{\varepsilon ,t}:=\int_{\varepsilon }^{t}\phi (X_{s}^{\ast })dM_{s}$ is a
continuous square integrable $\mathcal{F}_{t}$-martingale on $[\varepsilon
,+\infty )$. Moreover, the continuity of $X$ implies that the support of the
random measure $dX_{t}^{\ast }$ (on $[0,+\infty )$) is contained in the
(random) set $\left\{ t\geq 0:X_{t}=X_{t}^{\ast }\right\} $, thus yielding
that
\begin{eqnarray*}
\int_{\varepsilon }^{t}\left( X_{s}-A_{s}\right) \phi ^{\prime }\left(
X_{s}^{\ast }\right) dX_{s}^{\ast } &=&\int_{\varepsilon }^{t}\left(
X_{s}^{\ast }-A_{s}\right) \phi ^{\prime }\left( X_{s}^{\ast }\right)
dX_{s}^{\ast } \\
&=&-\int_{\varepsilon }^{t}A_{s}\phi ^{\prime }\left( X_{s}^{\ast }\right)
dX_{s}^{\ast }-\Phi \left( X_{t}^{\ast },X_{\varepsilon }^{\ast }\right) ,
\end{eqnarray*}%
where $\Phi $ is defined in (\ref{PHI}). As a consequence,
\begin{equation}
Z_{\varepsilon }\left( \phi ,\alpha ;t\right) =\widetilde{M}_{\varepsilon
,t}+\int_{\varepsilon }^{t}(-A_{s}\phi ^{\prime }\left( X_{s}^{\ast }\right)
)dX_{s}^{\ast }+\left( \alpha -1\right) \Phi \left( X_{t}^{\ast
},X_{\varepsilon }^{\ast }\right) \text{.}  \label{a}
\end{equation}%
Now observe that the application $t\mapsto \Phi \left( X_{t}^{\ast
},X_{\varepsilon }^{\ast }\right) $ is non-decreasing (a.s.-$\mathbb{P)}$,
and also that, by assumption, $-A_{s}\phi ^{\prime }\left( X_{s}^{\ast
}\right) \geq 0$ for every $s>0$. This entails immediately that $%
Z_{\varepsilon }\left( \phi ,\alpha ;t\right) $ is a $\mathcal{F}_{t}$%
-submartingale for every $\alpha \geq 1$.

(\textsl{Proof of Point 2.}) By using exactly the same line of reasoning as
in the proof of Point 1., we obtain that
\begin{equation}
Z_{\varepsilon }\left( \phi ,\alpha ;t\right) =\int_{\varepsilon }^{t}\phi
(X_{s}^{\ast })dM_{s}+\int_{\varepsilon }^{t}(-A_{s}\phi ^{\prime }\left(
X_{s}^{\ast }\right) )dX_{s}^{\ast }+\left( \alpha -1\right) \Phi \left(
X_{t}^{\ast },X_{\varepsilon }^{\ast }\right) .  \label{aa}
\end{equation}%
Since (\ref{int}) is in order, we deduce that $t\mapsto \int_{\varepsilon
}^{t}\phi (X_{s}^{\ast })dM_{s}$ is a continuous (square-integrable) $%
\mathcal{F}_{t}$-martingale on $[\varepsilon ,+\infty )$. Moreover, $%
-A_{s}\phi ^{\prime }\left( X_{s}^{\ast }\right) \leq 0$ for every $s>0$,
and we also have that $t\mapsto \Phi \left( X_{t}^{\ast },X_{\varepsilon
}^{\ast }\right) $ is a.s. decreasing. This implies that $Z_{\varepsilon
}\left( \phi ,\alpha ;t\right) $ is a $\mathcal{F}_{t}$-supermartingale for
every $\alpha \geq 1$. \ \ $\blacksquare $

\bigskip

The next result allows to characterize the nature of the process $Z$
appearing in (\ref{subsup}) on the whole positive axis. Its proof can be
immediately deduced from formulae (\ref{a}) (for Part 1) and (\ref{aa}) (for
Part 2).

\begin{proposition}
\label{P : epsilon}Let the assumptions and notation of this section prevail.

\begin{enumerate}
\item Consider a decreasing function $\phi :(0,+\infty )\mapsto \mathbb{R}$
verifying the assumptions of Part 1 of Theorem \ref{Th : general} and such
that
\begin{equation}
\Phi \left( x,0\right) :=-\int_{0}^{x}y\phi ^{\prime }\left( y\right) dy%
\text{ is finite }\forall x>0\text{.}  \label{f1}
\end{equation}%
Assume moreover that
\begin{equation}
\mathbb{E[}\int_{0}^{T}\phi \left( X_{s}^{\ast }\right) ^{2}d\left\langle
M\right\rangle _{s}]<+\infty \text{,}  \label{f2}
\end{equation}%
and also
\begin{eqnarray}
&&\phi \left( X_{\varepsilon }^{\ast }\right) M_{\varepsilon }=\phi \left(
X_{\varepsilon }^{\ast }\right) \left( X_{\varepsilon }-A_{\varepsilon
}\right) \text{ converges to zero in }L^{1}\left( \mathbb{P}\right) ,\text{
as }\varepsilon \downarrow 0,  \label{f3} \\
&&\Phi \left( X_{t}^{\ast },0\right) \in L^{1}\left( \mathbb{P}\right) \text{%
.}  \label{f4}
\end{eqnarray}%
Then, for every $\alpha \geq 1$ the process
\begin{equation}
Z\left( \phi ,\alpha ;t\right) =\left\{
\begin{array}{ll}
0 & \text{for }t=0 \\
\phi \left( X_{t}^{\ast }\right) \left( X_{t}-A_{t}\right) +\alpha \Phi
\left( X_{t}^{\ast },0\right) & \text{for }t>0%
\end{array}%
\right. ,  \label{zz}
\end{equation}%
is a $\mathcal{F}_{t}$-submartingale.

\item Consider an increasing function $\phi :(0,+\infty )\mapsto \mathbb{R}$
as in Part 2 of Theorem \ref{Th : general} and such that assumptions (\ref%
{f1})--(\ref{f4}) are satisfied. Then, for every $\alpha \geq 1$ the process
$Z\left( \phi ,\alpha ;t\right) $ appearing in (\ref{zz}) is a $\mathcal{F}%
_{t}$-supermartingale.
\end{enumerate}
\end{proposition}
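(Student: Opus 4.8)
The plan is to deduce the statement from Theorem~\ref{Th : general} by means of the additivity of $\Phi$ in its last two arguments, leaving only the behaviour at the origin to be handled by a limiting argument. The basic remark is that definition (\ref{PHI}) gives $\Phi(x,0)=\Phi(x,z)+\Phi(z,0)$ for $0<z\leq x$; taking $x=X_t^{\ast}$, $z=X_\varepsilon^{\ast}$ and $t\geq\varepsilon$ this reads
\[
Z(\phi,\alpha;t)=Z_\varepsilon(\phi,\alpha;t)+\alpha\,\Phi(X_\varepsilon^{\ast},0),
\]
where the extra summand is $\mathcal{F}_\varepsilon$-measurable and does not depend on $t$. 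Since (\ref{f2}) forces the hypothesis (\ref{int}) of Theorem~\ref{Th : general} for every $\varepsilon>0$, Part~1 of that theorem tells us that $Z_\varepsilon(\phi,\alpha;\cdot)$ is a submartingale on $[\varepsilon,+\infty)$; moreover its $L^1$-integrability, together with (\ref{f4}) and the sign of $\Phi(\cdot,0)$ (non-negative for non-increasing $\phi$), shows that $Z(\phi,\alpha;t)\in L^1(\mathbb{P})$. Adding the $\mathcal{F}_\varepsilon$-measurable constant preserves the submartingale inequality on $[\varepsilon,+\infty)$, so $Z(\phi,\alpha;\cdot)$ is a submartingale there. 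As $\varepsilon>0$ is arbitrary, this establishes the submartingale property on the whole open half-line $(0,+\infty)$, and the only remaining point is $t=0$.

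To reach the origin I would pass to the limit $\varepsilon\downarrow0$ in the submartingale inequality. For $0<\varepsilon\leq t$ the property just proved gives $\mathbb{E}[Z(\phi,\alpha;t)\mid\mathcal{F}_\varepsilon]\geq Z(\phi,\alpha;\varepsilon)$, and conditioning further on $\mathcal{F}_0$ yields $\mathbb{E}[Z(\phi,\alpha;t)\mid\mathcal{F}_0]\geq\mathbb{E}[Z(\phi,\alpha;\varepsilon)\mid\mathcal{F}_0]$. Now $Z(\phi,\alpha;\varepsilon)=\phi(X_\varepsilon^{\ast})M_\varepsilon+\alpha\,\Phi(X_\varepsilon^{\ast},0)$: the first term converges to $0$ in $L^1(\mathbb{P})$ by exactly hypothesis (\ref{f3}), while the second converges to $\alpha\,\Phi(0,0)=0$ because $X_\varepsilon^{\ast}\downarrow0$, the map $\Phi(\cdot,0)$ is continuous and non-decreasing, and $0\leq\Phi(X_\varepsilon^{\ast},0)\leq\Phi(X_t^{\ast},0)\in L^1(\mathbb{P})$ by (\ref{f1}) and (\ref{f4}) (dominated convergence). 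Hence $\mathbb{E}[Z(\phi,\alpha;\varepsilon)\mid\mathcal{F}_0]\to0$ in $L^1$, and along a subsequence a.s., so that $\mathbb{E}[Z(\phi,\alpha;t)\mid\mathcal{F}_0]\geq0=Z(\phi,\alpha;0)$. Combined with the previous paragraph this gives the full submartingale property on $[0,+\infty)$ and proves Part~1; Part~2 is identical up to reversing every inequality and every monotonicity (for non-decreasing $\phi$ one has $-A_s\phi'(X_s^{\ast})\leq0$ and $\Phi(\cdot,0)\leq0$, non-increasing), the role of (\ref{aa}) replacing that of (\ref{a}).

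Equivalently, and more in line with the hint, one may pass to the limit $\varepsilon\downarrow0$ in the It\^o identity (\ref{uyu}) from which (\ref{a}) is obtained. Under (\ref{f2}) the martingale $\widetilde M_{\varepsilon,t}$ converges to the square-integrable martingale $\int_0^t\phi(X_s^{\ast})\,dM_s$ issued from $0$; the non-negative integral $\int_\varepsilon^t(-A_s\phi'(X_s^{\ast}))\,dX_s^{\ast}$ increases, by monotone convergence, to its analogue over $[0,t]$; one has $\Phi(X_t^{\ast},X_\varepsilon^{\ast})\to\Phi(X_t^{\ast},0)$ by (\ref{f1}); and the lower-endpoint term $\phi(X_\varepsilon^{\ast})M_\varepsilon$ appearing in (\ref{uyu}) vanishes by (\ref{f3}). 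This exhibits $Z(\phi,\alpha;\cdot)$ on $[0,+\infty)$ as the sum of a martingale and an integrable non-decreasing process, both null at the origin, which is the most transparent way to read off the submartingale property and its supermartingale counterpart.

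I anticipate that the one genuinely delicate step is the passage to $t=0$: since $\phi$ (and $\phi'$) need not be defined at $y=0$, It\^o's formula cannot be applied on any interval containing the origin, and the whole content of hypotheses (\ref{f1})--(\ref{f4}) is to license the limit $\varepsilon\downarrow0$. Among them (\ref{f3}) is the crucial one, as it is precisely what kills the boundary contribution $\phi(X_\varepsilon^{\ast})M_\varepsilon$; without it the limiting decomposition would retain a residual term and $Z$ could fail to be a (sub/super)martingale across the origin.
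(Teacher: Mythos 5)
Your proposal is correct and follows essentially the same route as the paper, whose proof consists of the single remark that the result is ``immediately deduced'' from the decompositions (\ref{a}) and (\ref{aa}) by letting $\varepsilon \downarrow 0$ under hypotheses (\ref{f1})--(\ref{f4}); your third paragraph is exactly that argument, and your first two paragraphs merely repackage it via the additivity $\Phi(x,0)=\Phi(x,z)+\Phi(z,0)$ and a conditional-expectation limit, supplying details (domination of $\Phi(X_{\varepsilon}^{\ast},0)$ by $\Phi(X_{t}^{\ast},0)$, the role of (\ref{f3}) in killing the boundary term) that the paper leaves implicit.
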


\bigskip

\textbf{Remarks. }(i) A direct application of the Cauchy-Schwarz inequality
shows that a sufficient condition to have (\ref{f3}) is the following:%
\begin{equation}
\lim_{\varepsilon \downarrow 0}\mathbb{E}\left[ \phi \left( X_{\varepsilon
}^{\ast }\right) ^{2}\right] \times \mathbb{E}\left[ M_{\varepsilon }^{2}%
\right] =\lim_{\varepsilon \downarrow 0}\mathbb{E}\left[ \phi \left(
X_{\varepsilon }^{\ast }\right) ^{2}\right] \times \mathbb{E}\left[
\left\langle M\right\rangle _{\varepsilon }\right] =0  \label{cv0}
\end{equation}%
(observe that $\lim_{\varepsilon \downarrow 0}\mathbb{E}\left[
M_{\varepsilon }^{2}\right] =0$, since $M_{0}=0$ by assumption). In other
words, when (\ref{cv0}) is verified the quantity $\mathbb{E}\left[
M_{\varepsilon }^{2}\right] $ `takes care' of the possible explosion of $%
\varepsilon \mapsto \mathbb{E}\left[ \phi \left( X_{\varepsilon }^{\ast
}\right) ^{2}\right] $ near zero.

(ii) Let $\phi $ be non-increasing or non-decreasing on $\left( 0,+\infty
\right) $, and suppose that $\phi $ satisfies the assumptions of Theorem \ref%
{Th : general} and Proposition \ref{P : epsilon}. Then, the process $%
t\mapsto \int_{0}^{t}\phi (X_{s}^{\ast })dM_{s}$ is a continuous
square-integrable $\mathcal{F}_{t}^{W}$-martingale. Moreover, for any choice
of $\alpha \in \mathbb{R}$, the process $Z\left( \phi ,\alpha ;t\right) $, $%
t\geq 0$, defined in (\ref{zz}) is a semimartingale, with canonical
decomposition given by
\begin{equation*}
Z\left( \phi ,\alpha ;t\right) =\int_{0}^{t}\phi (X_{s}^{\ast
})dM_{s}+\int_{0}^{t}\left( (\alpha -1)X_{s}^{\ast }-A_{s}\right) \phi
^{\prime }\left( X_{s}^{\ast }\right) dX_{s}^{\ast }.
\end{equation*}

\section{A generalization of Theorem \protect\ref{Th : DavisSuh} \label{S :
Proof}}

The forthcoming Theorem \ref{T : DS} is a generalization of Theorem \ref{Th
: DavisSuh}. Recall the notation: $W$ is a standard Brownian motion issued
from zero, $W_{t}^{\ast }=\max_{s\leq t}\left\vert W_{s}\right\vert $ and $%
\mathcal{F}_{t}^{W}=\sigma \left\{ W_{u}:u\leq t\right\} $. We also set for
every $m\geq 1$, every $p>0$ and every $c\in \mathbb{R}$:
\begin{eqnarray}
J_{t} &=&J_{t}\left( m,c,p\right) =\left( W_{t}^{\ast }\right) ^{p-m}\left[
\left\vert W_{t}\right\vert ^{m}-A_{m,t}\right] +c\left( W_{t}^{\ast
}\right) ^{p}\text{, \ \ }t>0\text{,}  \label{ggei} \\
J_{0}\left( m,c,p\right) &=&J_{0}=0,  \notag
\end{eqnarray}%
where $t\mapsto A_{m,t}$ is the increasing natural process in the Doob-Meyer
decomposition of the $\mathcal{F}_{t}^{W}$-submartingale $t\mapsto
\left\vert W_{t}\right\vert ^{m}$. Of course, $J_{t}\left( 2,c,p\right)
=Y_{t}\left( c,p\right) $, as defined in (\ref{Y}).

\begin{theorem}
\label{T : DS}Under the above notation:

\begin{enumerate}
\item For every $p\in (0,m]$, the process $J_{t}$ is a $\mathcal{F}_{t}^{W}$%
-submartingale if, and only if, $c\geq \frac{m-p}{p}.$

\item For every $p\in \lbrack m,+\infty )$, the process $J_{t}$ is a $%
\mathcal{F}_{t}^{W}$-supermartingale if, and only if, $c\leq \frac{m-p}{p}.$
\end{enumerate}
\end{theorem}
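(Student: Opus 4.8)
The plan is to apply Theorem~\ref{Th : general} and Proposition~\ref{P : epsilon} directly, with the submartingale $X_t = |W_t|^m$ and a power function $\phi$. Since the theorem claims a generalization of Theorem~\ref{Th : DavisSuh} and the general results are already in hand, the natural strategy is to recognize $J_t$ as an instance of the process $Z(\phi,\alpha;t)$. Setting $X_t = |W_t|^m$, one has $X_t^\ast = (W_t^\ast)^m$ (since $x \mapsto x^m$ is increasing), and $M_t = X_t - A_{m,t}$ is the martingale part. **First I would** choose $\phi(y) = y^{(p-m)/m}$ so that $\phi(X_t^\ast) = (W_t^\ast)^{p-m}$; then $\phi(X_t^\ast)(X_t - A_{m,t}) = (W_t^\ast)^{p-m}[|W_t|^m - A_{m,t}]$ reproduces the first term of $J_t$. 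The exponent of $\phi$ is nonpositive when $p \le m$ (so $\phi$ is non-increasing, matching Part~1) and nonnegative when $p \ge m$ (so $\phi$ is non-decreasing, matching Part~2).

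**Next I would** compute $\Phi(x,0) = -\int_0^x y\phi'(y)\,dy$ with this $\phi$ and verify it produces a constant multiple of $(W_t^\ast)^p$, so that the term $\alpha\Phi(X_t^\ast,0)$ matches $c(W_t^\ast)^p$. A short calculation gives $\Phi(x,0) = \frac{m-p}{p}\,x^{p/m}$ (up to checking the sign and the finiteness in~\eqref{f1}, which requires $p>0$); hence $\alpha\Phi(X_t^\ast,0) = \alpha\frac{m-p}{p}(W_t^\ast)^p$. Matching this against $c(W_t^\ast)^p$ forces the identification $c = \alpha\frac{m-p}{p}$. For Part~1, where $p \le m$, the factor $\frac{m-p}{p} \ge 0$, so $\alpha \ge 1$ is equivalent to $c \ge \frac{m-p}{p}$; for Part~2, where $p \ge m$, the factor $\frac{m-p}{p} \le 0$, so $\alpha \ge 1$ corresponds to $c \le \frac{m-p}{p}$. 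This inequality-flip is exactly the content of the two parts, and it falls out cleanly from the sign of $m-p$.

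**The main obstacle** will be verifying the integrability hypotheses \eqref{f1}--\eqref{f4} of Proposition~\ref{P : epsilon} in the Brownian case, since these are what license extending the submartingale property from $[\varepsilon,\infty)$ down to $t=0$. I would need the explicit form of the natural process $A_{m,t}$ in the Doob-Meyer decomposition of $|W_t|^m$; by It\^o's formula (using $|W_t|^m = \int_0^t m|W_s|^{m-1}\,\mathrm{sgn}(W_s)\,dW_s + \frac{m(m-1)}{2}\int_0^t |W_s|^{m-2}\,ds$ for $m>1$, with the case $m=1$ handled via local time), one gets $A_{m,t} = \frac{m(m-1)}{2}\int_0^t |W_s|^{m-2}\,ds$, which is finite and integrable. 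The conditions \eqref{f2} and \eqref{f3} then reduce to scaling estimates: by Brownian scaling $W_\varepsilon^\ast \stackrel{d}{=} \sqrt{\varepsilon}\,W_1^\ast$ and $M_\varepsilon^2$ has expectation comparable to a power of $\varepsilon$, so the sufficient condition \eqref{cv0} from Remark~(i) should close the gap by a direct moment computation, using that Gaussian moments of all orders are finite. **The remaining task** is the necessity direction (the ``only if''): here I would use the semimartingale decomposition displayed in Remark~(ii), namely that the finite-variation part of $Z(\phi,\alpha;t)$ is $\int_0^t((\alpha-1)X_s^\ast - A_s)\phi'(X_s^\ast)\,dX_s^\ast$, and argue that if $\alpha < 1$ (equivalently $c$ on the wrong side of $\frac{m-p}{p}$) this drift picks up a strictly negative (resp.\ positive) contribution on the nonempty support of $dX_s^\ast$, violating the (super)martingale property; the key point is that $\mathbb{P}\{X_t^\ast > 0\} = 1$ guarantees this support is genuinely charged.
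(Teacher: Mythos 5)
Your proposal is correct and follows essentially the same route as the paper: the same choice $X_t=|W_t|^m$, $\phi(y)=y^{(p-m)/m}$, the same computation $\Phi(x,0)=\frac{m-p}{p}x^{p/m}$ with the identification $c=\alpha\frac{m-p}{p}$ and the resulting sign flip between the two parts, the verification of \eqref{f1}--\eqref{f4} by Brownian scaling and Cauchy--Schwarz, and the necessity argument via the finite-variation part of the semimartingale decomposition when $\alpha<1$. The only cosmetic difference is that the paper checks \eqref{f3} by a direct Cauchy--Schwarz estimate rather than citing \eqref{cv0}, and phrases the necessity step as the drift being neither non-decreasing nor non-increasing almost surely, which is the same observation you make.
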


\begin{proof}
Recall first the following two facts: (i) $W_{t}^{\ast }\overset{law}{=}%
\sqrt{t}W_{1}^{\ast }$ (by scaling), and (ii) there exists $\eta >0$ such
that $\mathbb{E}\left[ \exp (\eta \left( W_{1}^{\ast }\right) ^{-2})\right]
<+\infty $ (this can be deduced e.g. from \cite[Ch. II, Exercice 3.10]{RY}),
so that the random variable $\left( W_{1}^{\ast }\right) ^{-1}$ has finite
moments of all orders. Note also that the conclusions of both Point 1 and
Point 2 are trivial in the case where $p=m$. In the rest of the proof we
will therefore assume that $p\neq m$.

To prove Point 1, we shall apply Theorem \ref{Th : general} and Proposition %
\ref{P : epsilon} in the following framework: $X_{t}=\left\vert
W_{t}\right\vert ^{m}$ and $\phi \left( x\right) =x^{\frac{p-m}{m}}=x^{\frac{%
p}{m}-1}$. In this case, the martingale $M_{t}=\left\vert W_{t}\right\vert
^{m}-A_{m,t}$ is such that $\left\langle M\right\rangle
_{t}=m^{2}\int_{0}^{t}W_{s}^{2m-2}ds$, $t\geq 0$, and $\Phi \left(
x,z\right) =-\int_{z}^{x}y\phi ^{\prime }\left( y\right) dy=-\left( \frac{p}{%
m}-1\right) \int_{z}^{x}y^{\frac{p}{m}-1}dy=\frac{m-p}{p}\left( x^{\frac{p}{m%
}}-z^{\frac{p}{m}}\right) $. Also, for every $T>\varepsilon >0$%
\begin{eqnarray}
\mathbb{E[}\int_{\varepsilon }^{T}\phi \left( X_{s}^{\ast }\right)
^{2}d\left\langle M\right\rangle _{s}] &=&m^{2}\mathbb{E[}\int_{\varepsilon
}^{T}\left( W_{s}^{\ast }\right) ^{2p-2m}W_{s}^{2m-2}ds]  \notag \\
&\leq &m^{2}\mathbb{E[}\int_{\varepsilon }^{T}\left( W_{s}^{\ast }\right)
^{2p-2}ds]=m^{2}\mathbb{E[}\left( W_{1}^{\ast }\right) ^{2p-2}\mathbb{]}%
\int_{\varepsilon }^{T}s^{\frac{p}{2}-1}ds\text{,}  \label{jjj}
\end{eqnarray}%
so that $\phi $ verifies (\ref{int}) and (\ref{f2}). Relations (\ref{f1})
and (\ref{f4}) are trivially satisfied. To see that (\ref{f3}) holds, use
the relations
\begin{eqnarray*}
\mathbb{E}\left\{ \left\vert \phi \left( X_{\varepsilon }^{\ast }\right)
\left( X_{\varepsilon }-A_{\varepsilon }\right) \right\vert \right\} &=&%
\mathbb{E\{}\left\vert \left( W_{\varepsilon }^{\ast }\right) ^{p-m}\left[
\left\vert W_{\varepsilon }\right\vert ^{m}-A_{m,\varepsilon }\right]
\right\vert \} \\
&=&\mathbb{E}\left\{ \left\vert \left( W_{\varepsilon }^{\ast }\right)
^{p-m}M_{\varepsilon }\right\vert \right\} \leq \mathbb{E}\left\{ \left(
W_{\varepsilon }^{\ast }\right) ^{2p-2m}\right\} ^{1/2}\mathbb{E}\left\{
\left\langle M\right\rangle _{\varepsilon }\right\} ^{1/2} \\
&=&m\mathbb{E}\left\{ W_{1}^{2m-2}\right\} ^{1/2}\mathbb{E}\left\{ \left(
W_{1}^{\ast }\right) ^{2p-2m}\right\} ^{1/2}\varepsilon ^{\frac{p}{2}-\frac{m%
}{2}}\left( \int_{0}^{\varepsilon }s^{m-1}ds\right) ^{1/2} \\
&\rightarrow &0\text{, \ as }\varepsilon \downarrow 0\text{.}
\end{eqnarray*}%
From Point 1 of Proposition \ref{P : epsilon}, we therefore deduce that the
process $Z\left( t\right) $ defined as $Z\left( 0\right) =0$ and, for $t>0$,%
\begin{eqnarray}
Z\left( t\right) &=&\phi \left( \left( W_{t}^{\ast }\right) ^{m}\right)
\left[ \left\vert W_{t}\right\vert ^{m}-A_{m,t}\right] +\alpha \Phi \left(
\left( W_{t}^{\ast }\right) ^{m},0\right)  \label{g} \\
&=&\left( W_{t}^{\ast }\right) ^{p-m}\left[ \left\vert W_{t}\right\vert
^{m}-A_{m,t}\right] +\alpha \frac{m-p}{p}\left( W_{t}^{\ast }\right) ^{p}%
\text{,}  \label{gg}
\end{eqnarray}%
is a $\mathcal{F}_{t}^{W}$-submartingale for every $\alpha \geq 1$. By
writing $c=\alpha \frac{m-p}{p}$ in the previous expression, and by using
the fact that $\frac{m-p}{p}\geq 0$ by assumption, we deduce immediately
that $J_{t}\left( m,c;p\right) $ is a submartingale for every $c\geq \frac{%
m-p}{p}$. Now suppose $c<\frac{m-p}{p}$. One can use formulae (\ref{a}), (%
\ref{g}) and (\ref{gg}) to prove that
\begin{eqnarray*}
J_{t}\left( m,c;p\right) &=&\int_{0}^{t}\phi (X_{s}^{\ast
})dM_{s}+\int_{0}^{t}[-A_{m,s}\phi ^{\prime }\left( (W_{s}^{\ast
})^{m}\right) ]d(W_{s}^{\ast })^{m}+\left( \alpha -1\right) \Phi \left(
\left( W_{t}^{\ast }\right) ^{m},0\right) \\
&=&\int_{0}^{t}(W_{s}^{\ast })^{p-m}dM_{s} \\
&&+\left( \frac{p}{m}-1\right) \int_{0}^{t}[\left( 1-\alpha \right) \left(
W_{s}^{\ast }\right) ^{m}-A_{m,s}](W_{s}^{\ast })^{p-2m}d(W_{s}^{\ast })^{m}%
\text{,}
\end{eqnarray*}%
where $1-\alpha =1-pc/(m-p)>0$. Note that $\int_{0}^{t}(W_{s}^{\ast
})^{p-m}dM_{s}$ is a square-integrable martingale, due to (\ref{jjj}). To
conclude that, in this case, $J_{t}\left( m,c;p\right) $ cannot be a
submartingale (nor a supermartingale), it is sufficient to observe that (for
every $m\geq 1$ and every $\alpha <1$) the paths of the finite variation
process
\begin{equation*}
t\mapsto \int_{0}^{t}[\left( 1-\alpha \right) \left( W_{s}^{\ast }\right)
^{m}-A_{m,s}](W_{s}^{\ast })^{p-2m}d(W_{s}^{\ast })^{m}\text{ }
\end{equation*}%
are neither non-decreasing nor non-increasing, with $\mathbb{P}$-probability
one.

To prove Point 2, one can argue in exactly the same way, and use Point 2 of
Proposition \ref{P : epsilon} to obtain that the process $Z\left( t\right) $
defined as $Z\left( 0\right) =0$ and, for $t>0$,%
\begin{equation*}
Z\left( t\right) =\left( W_{t}^{\ast }\right) ^{p-m}\left[ \left\vert
W_{t}\right\vert ^{m}-A_{m,t}\right] +\alpha \frac{m-p}{p}\left( W_{t}^{\ast
}\right) ^{p}
\end{equation*}%
is a $\mathcal{F}_{t}^{W}$-supermartingale for every $\alpha \geq 1$. By
writing once again $c=\alpha \frac{m-p}{p}$ in the previous expression, and
since $\frac{m-p}{p}\leq 0$, we immediately deduce that $J_{t}\left(
m,c;p\right) $ is a supermartingale for every $c\leq \frac{m-p}{p}$. One can
show that $J_{t}\left( m,c;p\right) $ cannot be a supermartingale, whenever $%
c>\frac{m-p}{p}$, by using arguments analogous to those displayed in the
last part of the proof of Point 1.
\end{proof}

\bigskip

The following result is obtained by specializing Theorem \ref{T : DS} to the
case $m=1$ (via Tanaka's formula).

\begin{corollary}
\label{C : Tanaka}Denote by $\left\{ \ell _{t}:t\geq 0\right\} $ the local
time at zero of the Brownian motion $W$. Then, the process
\begin{eqnarray*}
J_{t}\left( p\right) &=&\left( W_{t}^{\ast }\right) ^{p-1}\left[ \left\vert
W_{t}\right\vert -\ell _{t}\right] +c\left( W_{t}^{\ast }\right) ^{p}\text{,
}t>0\text{,} \\
J_{0}\left( p\right) &=&0\text{,}
\end{eqnarray*}%
is such that: (i) for $p\in (0,1]$, $J_{t}\left( p\right) $ is a $\mathcal{F}%
_{t}^{W}$-submartingale if, and only if, $c\geq 1/p-1$, and (ii) for $p\in
\lbrack 1,+\infty )$, $J_{t}\left( p\right) $ is a $\mathcal{F}_{t}^{W}$%
-supermartingale if, and only if, $c\leq 1/p-1.$
\end{corollary}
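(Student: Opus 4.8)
The plan is to obtain Corollary \ref{C : Tanaka} as the special case $m=1$ of Theorem \ref{T : DS}; the only substantive step is to identify the natural increasing process $A_{1,t}$ appearing in (\ref{ggei}) with the Brownian local time $\ell_t$.

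First I would apply Tanaka's formula, which yields $\left\vert W_t\right\vert =\int_0^t \mathrm{sgn}(W_s)\,dW_s+\ell_t$ for every $t\geq 0$. Writing $\beta_t:=\int_0^t \mathrm{sgn}(W_s)\,dW_s$, I would note that $\beta$ is a continuous martingale issued from zero with $\left\langle \beta\right\rangle_t=t$, hence square-integrable (and in fact a standard Brownian motion by L\'evy's characterization), while $t\mapsto\ell_t$ is continuous, adapted, increasing and integrable, since $\mathbb{E}[\ell_t]=\mathbb{E}\left\vert W_t\right\vert<+\infty$. Thus $\left\vert W_t\right\vert=\beta_t+\ell_t$ is precisely a Doob-Meyer decomposition of the nonnegative continuous submartingale $\left\vert W_t\right\vert$ into a square-integrable martingale plus a natural increasing part, and by uniqueness this forces $A_{1,t}=\ell_t$. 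The same observation shows that the standing hypotheses of Section \ref{S : Gen} are met for $m=1$ (in particular $\mathbb{P}\{W_t^{\ast}>0\}=1$ for $t>0$), so that Theorem \ref{T : DS} is indeed applicable.

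With this identification, the process $J_t(1,c,p)$ of (\ref{ggei}) coincides with the process $J_t(p)$ of the statement, because $X_t=\left\vert W_t\right\vert$, $X_t^{\ast}=W_t^{\ast}$, and the threshold becomes $\frac{m-p}{p}=\frac{1-p}{p}=\frac1p-1$ when $m=1$. It then suffices to quote Theorem \ref{T : DS}: Part 1, applied with $p\in(0,1]$, gives that $J_t(p)$ is an $\mathcal{F}_t^W$-submartingale if and only if $c\geq \frac1p-1$, which is assertion (i); Part 2, applied with $p\in[1,+\infty)$, gives that $J_t(p)$ is an $\mathcal{F}_t^W$-supermartingale if and only if $c\leq \frac1p-1$, which is assertion (ii).

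Since everything reduces to substitution once $A_{1,t}=\ell_t$ is established, I do not anticipate any serious difficulty; the only mild obstacle is the bookkeeping needed to confirm that the local-time decomposition is the Doob-Meyer decomposition required by the general framework, and this is settled by uniqueness together with the elementary bound $\mathbb{E}\left\vert W_t\right\vert<+\infty$.
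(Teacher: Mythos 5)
Your proposal is correct and follows exactly the route the paper intends: the paper derives the corollary by specializing Theorem \ref{T : DS} to $m=1$, using Tanaka's formula to identify the natural increasing process $A_{1,t}$ with the local time $\ell_t$, which is precisely what you do (with the uniqueness of the Doob--Meyer decomposition spelled out more explicitly than in the paper). No gaps.
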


\section{Burkholder-Davis-Gundy (BDG) inequalities\label{S : BDG}}

We reproduce an argument taken from \cite[p. 314]{DS}, showing that the
first part of Theorem \ref{T : DS} can be used to obtain a strong version of
the BDG inequalities (see e.g. \cite[Ch. IV, \S 4]{RY}).

Fix $p\in (0,2)$ and define $c=(2-p)/p=2/p-1.$ Since, according to the first
part of Theorem \ref{T : DS}, $Y_{t}=Y_{t}(c,p)$ is a $\mathcal{F}_{t}^{W}$%
-submartingale starting from zero, we deduce that, for every bounded and
strictly positive $\mathcal{F}_{t}^{W}$-stopping time $\tau $, one has $%
\mathbb{E}(Y_{\tau })\geq 0$. In particular, this yields%
\begin{equation}
\mathbb{E}\left( \frac{\tau }{(W_{\tau }^{\ast })^{2-p}}\right) \leq \frac{2%
}{p}\mathbb{E}\left( (W_{\tau }^{\ast })^{p}\right) \text{.}  \label{zio}
\end{equation}%
Formula (\ref{zio}), combined with an appropriate use of H\"{o}lder's
inequality, entails finally that, for $0<p<2$,%
\begin{equation}
\mathbb{E}\left( \tau ^{\frac{p}{2}}\right) \leq \left[ \frac{2}{p}\mathbb{E}%
\left( (W_{\tau }^{\ast })^{p}\right) \right] ^{\frac{p}{2}}\left[ \mathbb{E}%
\left( (W_{\tau }^{\ast })^{p}\right) \right] ^{\frac{2-p}{2}}=\left[ \frac{2%
}{p}\right] ^{\frac{p}{2}}\mathbb{E}\left( (W_{\tau }^{\ast })^{p}\right)
\text{.}  \label{gi}
\end{equation}

Of course, relation (\ref{gi}) extends to general stopping times $\tau $
(not necessarily bounded) by monotone convergence (\textit{via} the
increasing sequence $\left\{ \tau \wedge n:n\geq 1\right\} $).

\bigskip

\textbf{Remark. }Let $\left\{ \mathfrak{A}_{n}:n\geq 0\right\} $ be a
discrete filtration of the reference $\sigma $-field $\mathfrak{A}$, and
consider a $\mathfrak{A}_{n}$-adapted sequence of measurable random elements
$\left\{ f_{n}:n\geq 0\right\} $ with values in a Banach space $\mathbf{B}$.
We assume that $f_{n}$ is a martingale, i.e. that, for every $n$, $\mathbb{E}%
\left[ f_{n}-f_{n-1}\mid \mathfrak{A}_{n-1}\right] =\mathbb{E}\left[
d_{n}\mid \mathfrak{A}_{n-1}\right] =0$, where $d_{n}:=f_{n}-f_{n-1}$. We
note
\begin{equation*}
S_{n}\left( f\right) =\sqrt{\sum_{k=0}^{n}\left\vert d_{k}\right\vert ^{2}}%
\text{ \ \ and \ \ }f_{n}^{\ast }=\sup_{0\leq m\leq n}\left\vert
f_{m}\right\vert ,
\end{equation*}%
and write $S\left( f\right) $ and $f^{\ast }$, respectively, to indicate the
pointwise limits of $S_{n}\left( f\right) $ and $f_{n}^{\ast }$, as $%
n\rightarrow +\infty .$ In \cite{Burkhj}, D.L.\ Burkholder proved that
\begin{equation}
\mathbb{E}\left( S\left( f\right) \right) \leq \sqrt{3}\mathbb{E}\left(
f^{\ast }\right) \text{,}  \label{BI}
\end{equation}%
where $\sqrt{3}$ is the best possible constant, in the sense that for every $%
\eta \in (0,\sqrt{3})$ there exists a Banach space-valued martingale $%
f_{\left( \eta \right) }$ such that $\mathbb{E}\left( S\left( f_{\left( \eta
\right) }\right) \right) >\eta \mathbb{E}\left( f_{\left( \eta \right)
}^{\ast }\right) $. As observed in \cite{DS}, Burkholder's inequality (\ref%
{BI}) should be compared with (\ref{gi}) for $p=1$, which yields
the relation
 $\mathbb{E}\left( \tau ^{1/2}\right) \leq \sqrt{2}\mathbb{E}%
(W_{\tau }^{\ast })$ for every stopping time $\tau $. This shows
that in such a framework, involving uniquely continuous
martingales, the constant $\sqrt{3}$ is no longer optimal.

\section{Balayage \label{S : Bal}}

Keep the assumptions and notation of Section \ref{S : Gen} and Theorem \ref%
{Th : general}, fix $\varepsilon >0$ and consider a finite variation
function $\psi :(0,+\infty )\mapsto \mathbb{R}.$ In this section we focus on
the formula%
\begin{equation}
\psi \left( X_{t}^{\ast }\right) \left( X_{t}-A_{t}\right) -\psi \left(
X_{\varepsilon }^{\ast }\right) \left( X_{\varepsilon }-A_{\varepsilon
}\right) =\int_{\varepsilon }^{t}\psi (X_{s}^{\ast })d\left(
X_{s}-A_{s}\right) +\int_{\varepsilon }^{t}\left( X_{s}^{\ast }-A_{s}\right)
d\psi (X_{s}^{\ast }),  \label{d}
\end{equation}%
where $\varepsilon >0$. Note that by choosing $\psi =\phi $ in (\ref{d}),
where $\phi \in C^{1}$ is monotone, one recovers formula (\ref{uyu}), which
was crucial in the proof Theorem \ref{Th : general}. We shall now show that (%
\ref{d}) can be obtained by means of the \textsl{balayage formulae} proved
in \cite{MY}.\textit{\ }

To see this, let $U=\left\{ U_{t}:t\geq 0\right\} $ be a continuous $%
\mathcal{F}_{t}$-semimartingale issued from zero. For every $t>0$ we define
the random time
\begin{equation}
\sigma \left( t\right) =\sup \left\{ s<t:U_{s}=0\right\} \text{.}
\label{tauti}
\end{equation}%
The following result is a particular case of \cite[Th. 1]{MY}.

\bigskip

\begin{proposition}[Balayage Formula]
Consider a stochastic process $\left\{ K_{t}:t>0\right\} $ such that the
restriction $\left\{ K_{t}:t\geq \varepsilon \right\} $ is locally bounded
and $\mathcal{F}_{t}$-predictable on $\left[ \varepsilon ,+\infty \right) $
for every $\varepsilon >0$. Then, for every fixed $\varepsilon >0$, the
process $K_{\sigma \left( t\right) }$, $t\geq \varepsilon $, is locally
bounded and $\mathcal{F}_{t}$-predictable, and moreover%
\begin{equation}
U_{t}K_{\sigma \left( t\right) }=U_{\varepsilon }K_{\sigma \left(
\varepsilon \right) }+\int_{\varepsilon }^{t}K_{\sigma \left( s\right)
}dU_{s}\text{.}  \label{balabala}
\end{equation}
\end{proposition}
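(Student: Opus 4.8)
The plan is to prove both assertions of the Proposition simultaneously by a functional monotone class argument, reducing a general locally bounded predictable $K$ to a simple generating family on which everything can be checked directly. Observe that both the predictability of $t\mapsto K_{\sigma(t)}$ and the identity (\ref{balabala}) are linear in $K$, and that the collection of processes $K$ for which they hold is a vector space containing the constants and stable under bounded monotone limits (this is argued in the third paragraph). Since the predictable $\sigma$-field on $[\varepsilon,+\infty)\times\Omega$ is generated by the elementary processes $K_s=H\mathbf{1}_{\{u<s\leq v\}}$ with $\varepsilon\leq u<v$ and $H$ a bounded $\mathcal{F}_u$-measurable random variable (completed by the value at $\varepsilon$, which is handled in the same way), and since this family is stable under multiplication, the functional monotone class theorem reduces the entire statement to such generators; local boundedness is then recovered by a routine localization.

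For such a generator I would introduce the two $\mathcal{F}_t$-stopping times $T=\inf\{s\geq u:U_s=0\}$ and $T'=\inf\{s\geq v:U_s=0\}$, noting that by continuity of $U$ one has $U_T=U_{T'}=0$ on $\{T<\infty\}$ and $\{T'<\infty\}$, and $T\geq u\geq\varepsilon$, $T'\geq v>\varepsilon$. The key pathwise fact is that $K_{\sigma(s)}=H\mathbf{1}_{\{T<s\leq T'\}}$ whenever $U_s\neq 0$, and in fact for $dU_s$-almost every $s$: indeed $\sigma(s)$ is the left endpoint of the excursion interval of $U$ straddling $s$, so the condition $u<\sigma(s)\leq v$ describes exactly the times lying between the first zero after $u$ and the first zero after $v$. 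Since $\mathbf{1}_{\{T<s\leq T'\}}$ is left-continuous and adapted and $H\in\mathcal{F}_u\subseteq\mathcal{F}_T$, this already yields that $K_{\sigma(\cdot)}$ is predictable. Moreover $\int_\varepsilon^t K_{\sigma(s)}\,dU_s=H\,(U_{t\wedge T'}-U_{t\wedge T})$, while $U_\varepsilon K_{\sigma(\varepsilon)}=0$ because $\sigma(\varepsilon)\leq\varepsilon\leq u$ forces $K_{\sigma(\varepsilon)}=0$. A short case analysis according to $t\leq T$, $T<t\leq T'$ and $t>T'$, using $U_T=U_{T'}=0$, then shows that the right-hand side of (\ref{balabala}) equals $U_t K_{\sigma(t)}$ in every case.

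Finally I would remove the restriction to generators. Predictability is preserved under pointwise limits, so that part is immediate. For the identity, if the $K^{(n)}$ are uniformly bounded and increase to $K$, then $U_t K^{(n)}_{\sigma(t)}$ and $U_\varepsilon K^{(n)}_{\sigma(\varepsilon)}$ converge pointwise, whereas $\int_\varepsilon^t K^{(n)}_{\sigma(s)}\,dU_s\to\int_\varepsilon^t K_{\sigma(s)}\,dU_s$ in probability, uniformly on compacts, by dominated convergence for stochastic integrals: writing $U=M+V$, the martingale part is controlled through $\mathbb{E}\int(K^{(n)}_{\sigma}-K_{\sigma})^2\,d\langle M\rangle\to 0$ and the finite-variation part by ordinary dominated convergence. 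This gives (\ref{balabala}) for all bounded predictable $K$, and a last localization extends it to the locally bounded case, establishing both claims.

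The step I expect to be delicate is the pathwise identification $K_{\sigma(s)}=H\mathbf{1}_{\{T<s\leq T'\}}$, and in particular the assertion that the two integrands agree $dU_s$-almost everywhere. The subtlety is that $dU_s$ may charge the zero set $\{U_s=0\}$: the sharpest example is $U=\left\vert W\right\vert$, whose finite-variation part is the local time at $0$, which lives precisely on $\{U=0\}$. What rescues the formula is that throughout $(T,T')$ the two integrands take the common value $H$ even at the zeros of $U$, so any local-time mass is integrated against the same integrand on both sides; the genuine discrepancies are confined to the isolated instants $T$ and $T'$, where $U=0$ and which carry no $dU$-mass because $U$ is continuous. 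Making this fully rigorous — including the boundary cases $U_u=0$ or $U_v=0$ and the strict inequality in the definition of $\sigma$ — is where the real care is required.
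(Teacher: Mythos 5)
The paper itself does not prove this proposition: it is quoted as a special case of \cite[Th. 1]{MY}, so there is no in-paper argument to compare against. Your overall architecture --- monotone class reduction to elementary predictable $K=H\mathbf{1}_{(u,v]}$, representation of $K_{\sigma(\cdot)}$ through hitting times of the zero set of $U$, a case analysis exploiting $U=0$ at those times, then passage to the limit --- is the standard proof of the balayage formula (cf. \cite[Ch. VI, \S 4]{RY}), and the reduction and limit steps are sound.

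The gap sits exactly where you flagged it, but your diagnosis of the difficulty is wrong, and as written the key step fails. With $T=\inf\{s\geq u:U_s=0\}$, the identity $K_{\sigma(s)}=H\mathbf{1}_{\{T<s\leq T'\}}$ is false even at times where $U_s\neq 0$: on the event $\{U_u=0\}$, if $u$ is right-isolated in the zero set and $D=\inf\{s>u:U_s=0\}>u$, then for every $s\in(u,D)$ one has $U_s\neq 0$ and $\sigma(s)=u$, hence $K_{\sigma(s)}=H\mathbf{1}_{\{u<u\leq v\}}=0$, whereas $T=u<s\leq T'$ assigns the value $H$. The discrepancy is therefore not confined to the ``isolated instants $T$ and $T'$'' but fills the whole interval $(T,D]$, and it is not $dU$-negligible: $\int_{(T,t]}H\,dU_s=H\,U_t\neq 0$ for $t\in(u,D)$. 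Your case analysis thus proves a correct identity about the wrong integrand; the worry about $dU$ charging the zero set (local time) is a red herring here. The fix is one line: use the strict hitting times $d_u=\inf\{s>u:U_s=0\}$ and $d_v=\inf\{s>v:U_s=0\}$. One checks pathwise, for every $s$, that $\sigma(s)>r$ if and only if $d_r<s$ (both assert the existence of a zero of $U$ in $(r,s)$), whence $\{u<\sigma(s)\leq v\}=\{d_u<s\leq d_v\}$ and $K_{\sigma(s)}=H\mathbf{1}_{(d_u,d_v]}(s)$ exactly --- not merely $dU$-a.e. Since $U_{d_u}=U_{d_v}=0$ on the relevant events and $H\in\mathcal{F}_u\subseteq\mathcal{F}_{d_u}$, your predictability argument, case analysis and limiting procedure then go through verbatim.
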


\bigskip

To see how (\ref{d}) can be recovered from (\ref{balabala}), set $%
U_{t}=X_{t}-X_{t}^{\ast }$ and $K_{t}=\psi \left( X_{t}^{\ast }\right) $.
Then, $K_{t}=K_{\sigma \left( t\right) }=\psi (X_{\sigma \left( t\right)
}^{\ast })$ by construction, where $\sigma \left( t\right) $ is defined as
in (\ref{tauti}). As a consequence, (\ref{balabala}) gives
\begin{equation*}
\psi \left( X_{t}^{\ast }\right) \left( X_{t}-X_{t}^{\ast }\right) =\psi
\left( X_{\varepsilon }^{\ast }\right) \left( X_{\varepsilon
}-X_{\varepsilon }^{\ast }\right) +\int_{\varepsilon }^{t}\psi (X_{s}^{\ast
})d\left( X_{s}-X_{s}^{\ast }\right) \text{.}
\end{equation*}%
Finally, a standard integration by parts applied to $\psi \left( X_{t}^{\ast
}\right) \left( X_{t}^{\ast }-A_{t}\right) $ yields
\begin{eqnarray*}
\psi \left( X_{t}^{\ast }\right) \left( X_{t}-A_{t}\right) &=&\psi \left(
X_{t}^{\ast }\right) \left( X_{t}-X_{t}^{\ast }\right) +\psi \left(
X_{t}^{\ast }\right) \left( X_{t}^{\ast }-A_{t}\right) \\
&=&\psi \left( X_{\varepsilon }^{\ast }\right) \left( X_{\varepsilon
}-X_{\varepsilon }^{\ast }\right) +\int_{\varepsilon }^{t}\psi (X_{s}^{\ast
})d\left( X_{s}-X_{s}^{\ast }\right) \\
&&+\psi \left( X_{\varepsilon }^{\ast }\right) \left( X_{\varepsilon }^{\ast
}-A_{\varepsilon }\right) +\int_{\varepsilon }^{t}\psi (X_{s}^{\ast
})d\left( X_{s}^{\ast }-A_{s}\right) \\
&&+\int_{\varepsilon }^{t}\left( X_{s}^{\ast }-A_{s}\right) d\psi \left(
X_{s}^{\ast }\right) \text{,}
\end{eqnarray*}%
which is equivalent to (\ref{d}).

\bigskip

\end{document}